\newcommand{\dd}{\,{\rm d}}
\newcommand\R{{\mathbb{R}}}
\renewcommand\S{{\mathbb{S}}}
\newtheorem{theorem}{Theorem}[section]
\newtheorem{proposition}[theorem]{Proposition}
\theoremstyle{definition}
\theoremstyle{remark}
\numberwithin{equation}{section}
\numberwithin{figure}{section}
\begin{document}

\title[Liouville Theorem for Degasperis--Procesi]
{A Liouville theorem for the Degasperis-Procesi equation}

\author{Lorenzo Brandolese}

\address{L. Brandolese: Universit\'e de Lyon~; Universit\'e Lyon 1~;
CNRS UMR 5208 Institut Camille Jordan,
43 bd. du 11 novembre,
Villeurbanne Cedex F-69622, France.}
\email{Brandolese{@}math.univ-lyon1.fr}
\urladdr{http://math.univ-lyon1.fr/$\sim$brandolese}

\thanks{Supported by the ANR project DYFICOLTI ANR-13-BS01-0003-01}

%\subjclass[2000]{Primary 76D05; Secondary 35Q30, 35B40}

%\date{Juin 2012}

\subjclass[2000]{35Q30, 37K10, 35B53, 74H35}
\keywords{Wave breaking, Blowup, Nonlinear dispersive waves, Shallow water, Integrable systems}

\date{October, 23 2014}

\begin{abstract}
We prove that the only global, strong, spatially periodic solution to the Degasperis-Procesi equation,
 vanishing at some point $(t_0,x_0)$,
 is the identically zero solution. We also  establish the analogue of such Liouville-type theorem
for the Degasperis-Procesi equation with an additional dispersive term.
\end{abstract}

\maketitle

\section{Introduction and main results}
\label{sec:intro}
We study spatially periodic solutions of the Degasperis-Procesi
equation
\begin{equation}
 \label{DP0}
 \begin{cases}
u_t-u_{txx}+4uu_x=3u_xu_{xx}+uu_{xxx}, &t>0,\quad x\in\R\\
  u(t,x)=u(t,x+1), &t\ge0,\quad x\in\R.
 \end{cases}
\end{equation}
 
Such equation attracted a considerable interest in the past few years, both for its remarkable mathematical 
 properties (see, {\it e.g.\/} \cites{DP99, CIL2010}), and for its physical interpretation as  an asymptotic
model obtained from the water-wave system in shallow water regime. In this setting, the equation models 
moderate amplitude waves and  $u$ stands for a horizontal velocity of the water at a fixed depth,
see \cites{AConL09, John02} and the references therein for further physical motivations.

The Cauchy problem associated with~\eqref{DP0} can be more conveniently reformulated as
\begin{equation}
 \label{DP}
 \begin{cases}
  u_t+uu_x+\partial_x p*\biggl(\displaystyle\frac32 u^2\biggr)=0, &t>0,\quad x\in\S\\
  u(0,x)=u_0(x), &x\in\S\\
 \end{cases}
\end{equation}
where $\S$ is the circle and $p$ the kernel of  $(1-\partial_x^2)^{-1}$, given by
 the continuous $1$-periodic function
\begin{equation}
 \label{kernel}
 p(x)=\frac{\cosh(x-[x]-1/2)}{2\sinh(1/2)}.
\end{equation}
Another possible reformulation of~\eqref{DP0} is the  momentum-velocity equation,
\begin{equation}
\label{eq:pot}
y_t+uy_x+3u_xy=0, \qquad y(t,x)=y(t,x+1),\qquad t>0,\;x\in\R,
\end{equation}
where $y=u-u_{xx}$ is the associated potential, and $u$ can be recovered from~$y$ from the convolution relation
$u=p*y$.

It is well known (see, {\it e.g.\/}, \cite{Yin03}) that if $u_0\in H^s(\S)$, with $s>3/2$,
then the problem~\eqref{DP} possess a unique solution
\begin{equation}
 \label{funspa}
 u\in C([0,T),H^s(\S))\cap C^1([0,T),H^{s-1}(\S)),
\end{equation}
for some $T>0$, depending only $u_0$.

The maximal time $T^*$ of the above solution can be finite or infinite.
For instance, if the initial potential $y_0=y(0,\cdot)$ does not change sign, then it is known that $T^*=+\infty$,
see \cite{ELY07}. 
On the other hand, several different  blowup criteria were established, {\it e.g.\/}, in~\cites{Yin03, ELY07,GGL-2011}:
in the shallow water interpretation the finite time blowup corresponds to a wave breaking mechanism, as near the blowup time
solutions remain bounded, but have an unbounded slope in at least one point.

The purpose of this short paper it to establish the following Liouville-type theorem:
\begin{theorem}
\label{th:liou}
The only global solution $u\in  C([0,+\infty),H^s(\S))\cap C^1([0,+\infty),H^{s-1}(\S))$, with $s>3/2$,
to the Degasperis-Procesi equation vanishing at some point $(t_0,x_0)$ is the identically zero solution.
\end{theorem}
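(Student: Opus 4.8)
The plan is to exploit the conserved quantities of the Degasperis–Procesi equation together with the sign-preservation structure of the momentum equation \eqref{eq:pot}. The key observation is that if a global solution vanishes at some point $(t_0,x_0)$, then at that instant $\int_\S y(t_0,x)\,u(t_0,x)\dd x$ and related integral functionals should be controlled, and the goal is to show these force $u\equiv 0$. Let me sketch the approach.

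\begin{proof}[Proof sketch]
First I would recall the conservation laws associated with \eqref{DP0}. The Degasperis–Procesi equation possesses the conserved functional
\begin{equation*}
E(u)=\int_\S y\,v\dd x,\qquad y=u-u_{xx},\quad v=(4-\partial_x^2)^{-1}u,
\end{equation*}
whose conservation in time is a standard computation using \eqref{eq:pot}. The first step is to verify that $E$ is indeed time-independent along the flow \eqref{DP}, and to understand its sign and coercivity properties: one expects $E(u)$ to be comparable to a weighted $H^1$-type norm of $u$, so that $E(u)=0$ would already force $u\equiv 0$.

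The heart of the argument, which is where I expect the main difficulty, is to extract from the vanishing hypothesis $u(t_0,x_0)=0$ enough information to conclude $E(u)=0$, or to run a direct pointwise estimate. A cleaner route is to work with the characteristics. Let $q(t,x)$ solve $q_t=u(t,q)$, $q(0,x)=x$; then the momentum equation \eqref{eq:pot} integrates along characteristics to give the representation
\begin{equation*}
y(t,q(t,x))\,q_x(t,x)^3 = y_0(x),
\end{equation*}
which shows that the sign of $y$ is preserved and that $q_x>0$ for all time, so the flow $q(t,\cdot)$ remains a diffeomorphism of $\S$. The plan is then to exploit the convolution relation $u=p*y$ with the explicit positive kernel \eqref{kernel}: since $p(x)>0$ everywhere on $\S$, the value $u(t_0,x_0)=\int_\S p(x_0-z)\,y(t_0,z)\dd z=0$ combined with a fixed sign of $y(t_0,\cdot)$ would immediately force $y(t_0,\cdot)\equiv 0$, hence $u(t_0,\cdot)\equiv 0$, and then uniqueness of the zero solution propagates this for all time.

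The genuine obstacle is that $y$ need not have a fixed sign for a general global solution, so the positivity-of-kernel shortcut does not apply directly. To handle the general case I would instead combine two conserved quantities — the functional $E$ above and the standard $\int_\S y\dd x$ (equivalently $\int_\S u\dd x$, a conserved mass) — and use the vanishing point to pin down the solution. Concretely, I expect the argument to hinge on a sharp inequality relating $\min_{x}u(t,x)$, $\max_x u(t,x)$, and the conserved energy, of the type $\|u\|_{L^\infty}^2\lesssim E(u)$, so that if $u$ touches zero while $E(u)$ is conserved and strictly positive for nonzero data, one derives a contradiction with the global-in-time boundedness unless $E(u)=0$. Making this inequality sharp enough to force $u\equiv 0$ from a single vanishing point, rather than merely bounding the solution, is the delicate step, and I would expect the proof to rely on a careful analysis of where the extrema of $u$ are attained together with the structure of the kernel $p$.
\end{proof}
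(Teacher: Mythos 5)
Your proposal has a genuine gap, and it sits exactly where you locate it yourself: the case where the momentum $y$ changes sign. The part you do carry out, namely the sign preservation $y(t,q(t,x))\,q_x(t,x)^3=y_0(x)$ along characteristics plus the strict positivity of the kernel $p$, so that $u(t_0,x_0)=0$ forces $y(t_0,\cdot)\equiv0$, is correct, but it only covers data with sign-definite momentum, which is precisely the class for which global existence was already known; the theorem concerns \emph{all} global solutions, and (as the paper stresses) no necessary and sufficient condition for globality is available for Degasperis--Procesi, so you cannot reduce to that class. Your fallback for the general case cannot be repaired as sketched: the conserved functional $E(u)=\int_\S y\,v\dd x$ with $v=(4-\partial_x^2)^{-1}u$ is equivalent to $\|u\|_{L^2(\S)}^2$, \emph{not} to an $H^1$-type norm (that is the Camassa--Holm situation, where $\int_\S yu\dd x=\|u\|_{H^1}^2$ and the Sobolev bound $\|u\|_{L^\infty}^2\le C\|u\|_{H^1}^2$ is available); the inequality $\|u\|_{L^\infty}^2\lesssim E(u)$ you hope for is false as a functional inequality on the circle (tall narrow bumps). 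More fundamentally, no static argument of this kind can work: a nonzero smooth periodic function can vanish at a point while its $L^2$-type invariants take any prescribed positive values, so conservation laws alone cannot conflict with the hypothesis $u(t_0,x_0)=0$. The obstruction encoded in the theorem is dynamical, not variational.

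The paper's proof indeed goes through finite-time blowup. By time translation and the symmetry $\tilde u(t,x)=-u(-t,x)$ one may assume $t_0=0$; then, if $u_0(x_0)=0$ but $u_0\not\equiv0$, an integration by parts applied to the weighted function $e^{\pm\sqrt{3/2}\,x}u_0(x)$ on a suitable interval produces a point $a$ with $u_0'(a)<-\sqrt{3/2}\,|u_0(a)|$. Along the characteristic through $a$ one sets $f=(-u_x+\sqrt{3/2}\,u)(t,q(t,a))$ and $g=-(u_x+\sqrt{3/2}\,u)(t,q(t,a))$; differentiating the equation and using the kernel inequality $p\pm\sqrt{3/2}\,p_x\ge0$ (true on the circle because $\sqrt{3/2}\le\coth(1/2)$, and false on the line, which is why the result is specifically periodic) yields $f'\ge fg$ and $g'\ge fg$, hence $h=\sqrt{fg}$ satisfies $h'\ge h^2$ with $h(0)>0$, forcing $T^*\le 1/h(0)<\infty$ and contradicting globality. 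This Riccati mechanism is exactly what your outline is missing: the vanishing point is used to manufacture a wave-breaking configuration, not to contradict a conservation law.
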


The Degasperis-Procesi equation is often written with the additional dispersive term $3\kappa u_x$ in the left-hand side of equation~\eqref{DP0}, where $\kappa\in\R$ is the dispersion parameter.
In this more general setting the above theorem can be reformulated as follows:

\begin{theorem}
\label{th:k}
Let $s>3/2$. If $v\in  C([0,+\infty),H^s(\S))\cap C^1([0,+\infty),H^{s-1}(\S))$ is a global solution
to the Degasperis-Procesi equation with dispersion
\begin{equation}
\label{eq:v}
v_t+vv_x+\partial_x p*\biggl(\displaystyle\frac32 v^2+3\kappa v\biggr)=0, \qquad t>0,\quad x\in\S,
\end{equation}
such that $v(t_0,x_0)=-\kappa$ at some point $(t_0,x_0)$, then $v(t,x)\equiv -\kappa$  for all $(t,x)$.
\end{theorem}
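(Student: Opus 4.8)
The plan is to reduce Theorem~\ref{th:k} to Theorem~\ref{th:liou} by a Galilean-type change of variables, which is the natural way to absorb the linear dispersive term $3\kappa v$. First I would look for a substitution that turns a solution $v$ of the dispersive equation~\eqref{eq:v} into a solution $u$ of the dispersionless equation~\eqref{DP}. The key observation is that if we set
\begin{equation*}
u(t,x)=v(t,x-\kappa t)+\kappa,
\end{equation*}
then $u$ should solve~\eqref{DP}. The intuition is that the extra term $3\kappa v$ inside the convolution, combined with the shift $x\mapsto x-\kappa t$ in the frame moving with speed $\kappa$, conspires with the constant vertical shift $+\kappa$ to cancel all the dispersive contributions. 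I would verify this by direct computation: writing $u(t,x)=v(t,x-\kappa t)+\kappa$ and differentiating, the time derivative picks up an extra $-\kappa v_x$ from the chain rule, which must be matched against the transport and convolution terms.

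The main computation is to substitute into the left-hand side of~\eqref{DP}. With $w(t,y)=v(t,y)$ evaluated at $y=x-\kappa t$, I compute $u_t=v_t-\kappa v_x$, $u_x=v_x$, and $uu_x=(v+\kappa)v_x=vv_x+\kappa v_x$. Collecting the transport part gives $u_t+uu_x=v_t+vv_x$ (the $\pm\kappa v_x$ terms cancel). For the convolution term I use that $p*1$ is a constant (since $p$ integrates to $1$ against the constant, $\partial_x p*(\text{const})=0$), so that $\partial_x p*\bigl(\frac32 u^2\bigr)=\partial_x p*\bigl(\frac32(v+\kappa)^2\bigr)=\partial_x p*\bigl(\frac32 v^2+3\kappa v\bigr)$, because the purely constant term $\frac32\kappa^2$ is annihilated by $\partial_x p*$. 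Hence $u$ solves exactly~\eqref{DP}, and all of this must be understood at the correct frame $y=x-\kappa t$ so that the spatial convolution commutes with the shift (the kernel $p$ is translation invariant on $\S$, so convolution in $x$ equals convolution in $y$).

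Once the correspondence is established, the regularity transfers immediately: the map $v\mapsto u$ is a shift plus an additive constant, so $u$ inherits the class $C([0,+\infty),H^s(\S))\cap C^1([0,+\infty),H^{s-1}(\S))$ from $v$ (adding a constant is harmless since we work on the compact circle, where constants lie in every $H^s$). The hypothesis $v(t_0,x_0)=-\kappa$ translates exactly into $u(t_0,x_0+\kappa t_0)=v(t_0,x_0)+\kappa=0$, so $u$ is a global solution vanishing at the point $(t_0,x_0+\kappa t_0)$. Theorem~\ref{th:liou} then forces $u\equiv 0$, which unwinds to $v(t,x)=u(t,x+\kappa t)-\kappa\equiv-\kappa$, the desired conclusion.

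The step I expect to require the most care is the convolution cancellation: one must check rigorously that $\partial_x p*c=0$ for a constant $c$ on the circle, i.e.\ that the constants are exactly the kernel of the operator $\partial_x(1-\partial_x^2)^{-1}$ acting on periodic functions. This is true because $(1-\partial_x^2)^{-1}$ maps constants to constants and $\partial_x$ then kills them, but it relies on interpreting $p*$ as $(1-\partial_x^2)^{-1}$ on the periodic setting, which is exactly the definition of $p$ in~\eqref{kernel}. Everything else is a routine but slightly delicate chain-rule bookkeeping in the moving frame, and care is needed only to make sure the convolution, which is a global spatial operation, interacts correctly with the $x$-translation.
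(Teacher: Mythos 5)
Your proposal is correct and coincides with the paper's own argument: the paper reduces Theorem~\ref{th:k} to Theorem~\ref{th:liou} via exactly the same change of variables $u(t,x)=v(t,x-\kappa t)+\kappa$, noting that $u$ solves~\eqref{DP} if and only if $v$ solves~\eqref{eq:v}. Your write-up simply makes explicit the chain-rule and convolution cancellations (in particular $\partial_x p*(\mathrm{const})=0$) that the paper leaves to the reader.
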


In the next section we will compare these theorems with earlier related results. 
The main idea of the present paper will be remark that, in the dispersionless case, 
for all time $t\in\R^+$, at least one of the two functions 
$x\mapsto e^{\pm \sqrt{3/2}\,q(t,x)} u(t,q(t,x))$, where $q(t,x)$ is the flow of the  global solution~$u$,
must be monotonically increasing.

\medskip
As a byproduct of our approach, 
we will get a  simple and natural blowup criterion for periodic solutions to the Degasperis-Procesi equation,
with or without dispersion, that is of independent interest:

\begin{proposition}
\label{prop:blow}
Let $v_0\in H^s(\S)$, with $s>3/2$, be such that $v_0'(a)<-\textstyle\sqrt{\frac32} \,\bigl|v_0(a)+\kappa\bigr|$
for some $a\in\S$. Then the solution $v\in C([0,T^*),H^s(\S))\cap C^1([0,T^*),H^s(\S))$  of~\eqref{eq:v} arising from~$v_0$
blows up in finite time.
\end{proposition}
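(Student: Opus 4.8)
The plan is to track the dynamics along the single characteristic issuing from the point $a$ and to show that the slope $v_x$ along it satisfies a Riccati differential inequality that forces blow-up. Write $w=v+\kappa$ and let $q(t,x)$ be the flow of $v$, i.e. $\partial_t q=v(t,q)$ with $q(0,x)=x$, so that $\partial_t q_x=v_x(t,q)\,q_x$ and in particular $q_x>0$. Along the characteristic through $a$ I set
\[
U(t)=w\bigl(t,q(t,a)\bigr),\qquad V(t)=v_x\bigl(t,q(t,a)\bigr)=w_x\bigl(t,q(t,a)\bigr).
\]
Differentiating \eqref{eq:v} once in $x$ and using the kernel identity $\partial_x^2 p*f=p*f-f$, I would first derive the two ODEs $\dot U=-Q_x$ and $\dot V=-V^2+\tfrac32 U^2-\tfrac32\kappa^2-Q$ along the characteristic, where $Q=p*(\tfrac32 v^2+3\kappa v)$ and $Q_x=\partial_x p*(\tfrac32 v^2+3\kappa v)$, all evaluated at $(t,q(t,a))$. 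Completing the square turns the local quadratic into a product, $-V^2+\tfrac32 U^2=-AB$ with $A=V+\sqrt{3/2}\,U$ and $B=V-\sqrt{3/2}\,U$; and since $\tfrac32 v^2+3\kappa v=\tfrac32 w^2-\tfrac32\kappa^2$ and $p*1=1$, one has $R:=Q+\tfrac32\kappa^2=\tfrac32\,p*w^2\ge0$ and $Q_x=\tfrac32\,p'*w^2$. The system then reads
\[
\dot A=-AB-\bigl(R+\sqrt{3/2}\,Q_x\bigr),\qquad \dot B=-AB-\bigl(R-\sqrt{3/2}\,Q_x\bigr).
\]

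The decisive point, and the one I expect to be the main obstacle, is to show that both nonlocal remainders are nonnegative, namely $R\pm\sqrt{3/2}\,Q_x\ge0$. Since both quantities are built from $w^2$ only, this reduces to the kernel positivity
\[
R\pm\sqrt{3/2}\,Q_x=\tfrac32\bigl(p\pm\sqrt{3/2}\,p'\bigr)*w^2\ge0,
\]
which I would establish by checking, from \eqref{kernel}, that the two $1$-periodic kernels $p\pm\sqrt{3/2}\,p'$ are pointwise nonnegative. On a fundamental domain this is the elementary inequality $\cosh\theta\ge\sqrt{3/2}\,|\sinh\theta|$ for $|\theta|\le\tfrac12$, which holds because $\sqrt{3/2}\,\tanh(1/2)<1$; convolving the nonnegative kernels against $w^2\ge0$ then gives the claim. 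This is exactly the step where the constant $\sqrt{3/2}$, dictated by the coefficient $\tfrac32$ of the quadratic nonlinearity, must be compatible with the geometry of the kernel~$p$, and it is the heart of the matter.

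Granting the positivity, I obtain the two one-sided bounds $\dot A\le-AB$ and $\dot B\le-AB$. The hypothesis $v_0'(a)<-\sqrt{3/2}\,|v_0(a)+\kappa|$ says precisely that $\mu(0)<0$, where $\mu:=\max(A,B)=V+\sqrt{3/2}\,|U|$; in particular $A(0),B(0)<0$. As long as $A,B\le0$ one has $AB=|A|\,|B|\ge\mu^2$, so the Lipschitz function $\mu$ obeys the Riccati differential inequality $D^+\mu\le\max(\dot A,\dot B)\le-AB\le-\mu^2$ in the sense of upper Dini derivatives, with $\mu(0)<0$. Since $-\mu^2\le0$, $\mu$ is nonincreasing and stays negative, so the cone $\{A<0,\,B<0\}$ is never exited; comparison with the solution of $\dot\phi=-\phi^2$, $\phi(0)=\mu(0)$, which diverges to $-\infty$ at the finite time $1/|\mu(0)|$, then forces $\mu(t)\to-\infty$, and hence $V(t)=v_x(t,q(t,a))\to-\infty$, no later than $t=1/|\mu(0)|$.

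Finally I would conclude that this is incompatible with global existence: because $s>3/2$ gives the embedding $H^s(\S)\hookrightarrow C^1(\S)$, a solution on $[0,T]$ has $v_x(t,\cdot)$ bounded for every $t\le T$, so the divergence $V(t)\to-\infty$ at the finite time $1/|\mu(0)|$ shows that $T^*\le 1/|\mu(0)|<\infty$. The only routine verifications left are the derivation of the two characteristic ODEs and the standard comparison argument for the Dini inequality; the substantive input is the kernel positivity $p\pm\sqrt{3/2}\,p'\ge0$.
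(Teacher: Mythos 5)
Your proof is correct, and its mathematical core is exactly that of the paper: your quantities $A=v_x+\sqrt{3/2}\,(v+\kappa)$ and $B=v_x-\sqrt{3/2}\,(v+\kappa)$ evaluated along the flow are, up to sign, the paper's functions $g$ and $f$; your kernel positivity $p\pm\sqrt{3/2}\,p'\ge 0$ is precisely the paper's inequality~\eqref{fail} (the paper phrases it as $|\beta|\le\coth(1/2)$, you as $\sqrt{3/2}\,\tanh(1/2)<1$ --- the same computation); and the blowup comes from the same factorized Riccati system along a single characteristic. The differences are in packaging. The paper proves the dispersionless case and then deduces general $\kappa$ from the transformation $u(t,x)=v(t,x-\kappa t)+\kappa$, which converts solutions of~\eqref{eq:v} into solutions of~\eqref{DP} and carries the hypothesis on $v_0$ into the condition~\eqref{blow-cond} on $u_0=v_0+\kappa$; you instead absorb $\kappa$ directly through $\tfrac32 v^2+3\kappa v=\tfrac32(v+\kappa)^2-\tfrac32\kappa^2$ together with $p*1=1$ and $p'*1=0$, which is equivalent algebra done in place. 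For the concluding ODE argument the paper sets $h=\sqrt{fg}$ and uses the arithmetic--geometric mean inequality to get $h'\ge h^2$, while you run a Dini-derivative comparison on $\mu=\max(A,B)$; both are valid, and the paper's bound $T^*\le 1/\sqrt{f(0)g(0)}$ is in fact slightly sharper than your $T^*\le 1/|\mu(0)|=1/\min\bigl(f(0),g(0)\bigr)$.

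One step you label as routine is not quite: deriving the ODE for $V(t)=v_x(t,q(t,a))$ requires evaluating $v_{xx}$ pointwise along the characteristic, and for $3/2<s\le 5/2$ the solution need not have a continuous second derivative, so the ``differentiation of~\eqref{eq:v} in $x$'' cannot be performed classically. The paper addresses this explicitly by approximating the data by a sequence in $H^3(\S)$ and observing that the resulting upper bound on the blowup time is independent of the approximation parameter; your argument needs the same (standard) insertion, and it works for the same reason, since your bound $1/|\mu(0)|$ depends only on $v_0(a)$ and $v_0'(a)$ and hence passes to the limit along the approximating sequence. This is a necessary technical addition rather than a flaw in the idea.
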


% 
% As a byproduct of our approach, we get new simple and natural wave breaking criteria for periodic solutions,
% requiring no involved a priori estimate on spatial norms of the solution.
% See Proposition~\ref{prop:blow} below.

\section{Comparison with some earlier results}
\label{sec:comparison}

In \cite{ELY07}*{Theorem 3.8}, Escher, Liu and Yin established the blowup for equation~\eqref{DP0}
assuming that $u_0\in H^s(\S)$, $u_0\not\equiv0$,
 and that the corresponding solution $u(t,x)$ \emph{vanishes in at least one point $x_t\in\S$ for all $t\in[0,T^*)$}.
Theorem~\ref{th:liou} improves their result (and the corresponding corollaries) 
by providing the same conclusion $T^*<\infty$ with a shorter proof, and 
under an assumption that is easier to check.

Applying Proposition~\ref{prop:blow} with $\kappa=0$ and $a=0$ 
improves Yin's blowup criterion~\cite{Yin03}*{Theorem 3.2}, establishing the blowup for \emph{odd} initial data
with negative derivative at the origin.

In the particular case $\kappa=0$, Proposition~\ref{prop:blow} improves and  simplifies
the wave-breaking criterion of \cite{ELY07}*{Theorem~4.3} (and its corollaries), that established the blowup under a condition of the form 
$v_0'(a)<-\bigl(c_0\|v_0\|_{L^\infty}+c_1\|v_0\|_{L^2}\bigr)$, with suitable $c_0,c_1>0$. 
In fact, Proposition~\ref{prop:blow} shows that one can take $c_1=0$, and more importantly, one only needs to check the behavior of $u_0$ in a neighborhood of a single point to get the blowup condition.

For general $\kappa\in\R$, Proposition~\ref{prop:blow} extends and considerably simplifies the  blowup condition 
$v_0'(a)<-M$ established in~\cite{GGL-2011}*{Theorem~4.1}, 
where $M=M(\kappa,\|v_0\|_{L^2},\|v_0\|_{L^\infty})$ was given by a quite involved expression.
In the same way, the pointwise estimates~\eqref{pointwise} below allow to improve results like 
\cite{GGL-2011}*{Theorem~4.2} and its corollaries.

A Liouville-type theorem in the same spirit as Theorem~\ref{th:liou} has been established for periodic solutions
of the hyperelastic rod equation in~\cite{BraCor14}, when the physical parameter~$\gamma$ of the model belongs to a 
suitable range (including $\gamma=1$, that corresponds to the dispersionless Camassa--Holm equation).
The specific structure of the nonlocal term
of the Degasperis-Procesi equation makes possible the much more concise proof presented here.
Neverthless, the Degasperis--Procesi equation remains worse understood than Camassa--Holm's.
First of all, the geometric picture between the two equations is different:
the Camassa--Holm equation can be realized as a metric Euler equation. On the other hand,  there is no Riemannian metric on $\text{Diff}^\infty(\S)$ such that the corresponding geodesic flow is given by the Degasperis--Procesi equation.
In fact, the more subtle geometric picture for the latter equation has been disclosed only recently, see \cite{KolEsch11}.
Moreover, no \emph{necessary and sufficient\/} condition for the global existence of solutions to the Degasperis--Procesi equation is available. (Such a condition is instead known for the  Camassa--Holm equation, see \cite{McKean04}). 
For this reason,   Proposition~\ref{prop:blow}  provides valuable information.

\section{Proofs}

\begin{proof}[Proof of Theorem~\ref{th:liou}]
Equation~\eqref{DP0} is invariant under time translations and under the transformation
$\tilde u(t,x)=-u(-t,x)$. Therefore, it is enough to prove that if $u_0(x_0)=0$ at some point $x_0\in\S$, but 
$u_0\not\equiv0$, then the solution~$u\in C([0,T),H^s(\S))\cap C^1([0,T),H^{s-1}(\S))$ 
arising from~$u_0$ must blow up in finite time.
 Let $\alpha\in(x_0,x_0+1)$ be such that $u_0(\alpha)\not=0$.
 
 We first consider the case $u_0(\alpha)>0$.
 Let us introduce the map 
 \[\phi(x)=e^{\sqrt{\textstyle\frac32} x}u_0(x).\]
 By the periodicity and the continuity of~$u_0$, we can find an open interval $(\alpha,\beta)\subset (x_0,x_0+1)$
 such that $\phi(x)>0$ on the interval $(\alpha,\beta)$ and $\phi(\alpha)>0$, $\phi(\beta)=0$.
 An integration by parts gives
 \begin{equation}
  \label{iip}
  \int_\alpha^{\beta} e^{\sqrt{\textstyle\frac32}x}u_0'(x)\dd x
  =\phi(\beta)-\phi(\alpha)-\int_\alpha^{\beta}\sqrt{\textstyle\frac32}\phi(x)\dd x.
 \end{equation}
We deduce from this the existence of~$a\in(\alpha,\beta)$ such that 
$u_0'(a)<-\sqrt{\textstyle\frac32}\,u_0(a)<0$.
Indeed, otherwise, we could bound the left-hand side in~\eqref{iip}  from below by 
$\displaystyle\int_\alpha^\beta -\sqrt{\textstyle\frac32}\phi(x)\dd x$,
and get the contradiction $\phi(\alpha)\le \phi(\beta)$.

The second case to consider is $u_0(\alpha)<0$:
introducing now the map $\psi(x)=e^{-\sqrt{\textstyle\frac32} x}u_0(x)$ and arguing as before, 
we get in this case the existence of a point $a$ such that
$u_0'(a)<\sqrt{\textstyle\frac32} \,u_0(a)<0$.
Notice that in both cases we get
\begin{equation}
 \label{blow-cond}
\exists \,a\in\S \quad\mbox{such that}\quad
u_0'(a)<-\sqrt{\textstyle\frac32}\,\bigl|u_0(a)\bigr|.
\end{equation}
We thus reduced the proof of our claim to establishing the finite time blowup under condition~\eqref{blow-cond}, with
$u_0\in H^s(\R)$.  In fact, by approximating $u_0$ with a sequence  $(u_n)\subset H^3(\S)$, we can assume without loss of generality that $u_0\in H^3(\S)$. (Indeed, the argument below will provide un upper bound for $T^*$ independent on 
the parameter~$n$).

\medskip
Let us introduce the flow map
\begin{equation}
 \label{def:flow}
 \begin{cases}
  q_t(t,x)=u(t,q(t,x)), &t\in (0,T^*),\quad x\in\R\\
  q(0,x)=x, &\qquad x\in\R.
 \end{cases}
\end{equation}
We also introduce the $C^1$ functions, defined on $(0,T^*)$,
\[
 f(t)=(-u_x+\sqrt{\textstyle\frac32} u)\bigl(t,q(t,a)\bigr),
 \]
 and
 \[
 g(t)=-(u_x+\sqrt{\textstyle\frac32} u)\bigl(t,q(t,a)\bigr).
 \]
Taking the spatial derivative in equation~\eqref{DP},
recalling that $(1-\partial^2_x)p$ is the Dirac mass,
we get
\[
 u_{tx}+uu_{xx}=-u_x^2+\frac32 u^2 -p*\biggl(\frac32 u^2\biggr).
\]
Using the definition of the flow map~\eqref{def:flow} we obtain
\begin{equation*}
 \begin{split}
  f'(t)
  &=\biggl[ -(u_{tx}+uu_{xx})+\sqrt{\textstyle\frac32}(u_t+uu_x)\biggr](t,q(t,a))\\
  &=\biggl[u_x^2-\textstyle\frac32 u^2 +(p-\sqrt{\textstyle\frac32}p_x)*(\textstyle\frac32 u^2)\biggr](t,q(t,a)).\\
   \end{split}
\end{equation*}
From expression~\eqref{kernel} we easily get
\[
 (p\pm\beta p_x)\ge0 
 \qquad
 \mbox{if and only if}
 \qquad |\beta|\le {\coth{(1/2)}},
\]
and so, in particular
\begin{equation} 
 \label{fail}
p\pm\sqrt{\textstyle\frac32}p_x\ge0.
 \end{equation}
Hence we get
\begin{equation*}
 \begin{split}
  f'(t)\ge \bigl[ u_x^2 -\textstyle\frac32 u^2 \bigr](t,q(t,a)).
   \end{split}
\end{equation*}
Factorizing the right-hand side leads to the differential inequality
\begin{equation}
\label{di1}
f'(t)\ge f(t)g(t), \qquad t\in(0,T^*).
\end{equation}
A similar computation yields
\begin{equation}
\label{di2}
 g'(t)\ge f(t)g(t), \qquad t\in(0,T^*).
\end{equation}

Let
\[h(t)=\sqrt{f(t)g(t)}.\]
We first observe that
\[
h(0)=\sqrt{fg(0)}=\sqrt{u_0'(a)^2-\textstyle\frac32 u_0(a)^2}\,>0.
\]
Moreover, we deduce from the system~\eqref{di1}-\eqref{di2}, 
applying the geometric-arithmetic mean inequality, that
\[
 h'(t)\ge h^2(t), \qquad t\in(0,T^*).
\]
This immediately implies 
$T^*\le 1/h(0)<\infty$.
Theorem~\ref{th:liou} is completely established.
\end{proof}

\medskip
The above proof establishes also Proposition~\ref{prop:blow} in the particular case $\kappa=0$.
But $u(t,x)=v(t,x-\kappa t)+\kappa$,
is a global solution of~\eqref{DP} if and only if $v$ is a global solution of~\eqref{eq:v} with $u_0=v_0+\kappa$.
Hence, Theorem~\ref{th:k} follows immediately from Theorem~\ref{th:liou}. In the same way, we see that the claim of 
Proposition~\ref{prop:blow} holds true for general~$\kappa$.

\medskip
Our arguments also reveal that global solutions must satisfy quite stringent pointwise estimates.
Indeed, assume that $u\in C([0,\infty),H^s(\S))\cap C^1([0,\infty),H^{s-1}(\S))$ is a given global solution of~\eqref{DP0}.
Then, by our theorem, $\mbox{sign(u)}=1,0$ or $-1$ is well defined and independent on $(t,x)$.  
Moreover,  $u'(t,x)\ge -\sqrt{\frac32} |u(t,x)|$ for all $t\ge0$ and $x\in\S$.
Then, arguing as in~\eqref{iip}, we deduce that, for all $t\ge0$, the map
$x\mapsto e^{\mbox{\scriptsize sign}(u)\sqrt{\frac32} x} \,u(t,x)$ is increasing.
Combining this with the periodicity, we get the pointwise estimates for $u(t,x)$,
for all $t\ge0$, all $\alpha\in\R$ and $\alpha\le x\le \alpha+1$:
\begin{equation}
\label{pointwise}
  e^{\mbox{\scriptsize sign}(u)\sqrt{\frac32}(\alpha-x)}{u(t,\alpha)} \le u(t,x) 
  \le e^{\mbox{\scriptsize sign}(u)\sqrt{\frac32}(\alpha+1-x)}{u(t,\alpha)}.
\end{equation}
From~\eqref{pointwise} one immediately deduces the corresponding estimates for global
 solutions to the Degasperis-Procesi
equation with dispersion.

\medskip
We conclude observing that our results 
seem to  be specific to periodic solutions and have no analogue in the case of solutions in~$ H^s(\R)$.
A reason for this is that in the non-periodic case the expression of $p$ should be 
modified into $p(x)=\frac12 e^{-|x|}$ and the fundamental inequality~\eqref{fail} is no longer true.

\end{document}